\documentclass[11pt]{amsart}
\usepackage[utf8]{inputenc}
\usepackage{graphicx,color}
\usepackage{color}
\newtheorem{theorem}{Theorem}[section]
\newtheorem{lemma}[theorem]{Lemma}
\newtheorem{proposition}[theorem]{Proposition}
\theoremstyle{definition}
\newtheorem{definition}[theorem]{Definition}
\newtheorem{example}[theorem]{Example}

\theoremstyle{remark}
\newtheorem{remark}[theorem]{Remark}
\usepackage[english]{babel}
\usepackage[T1]{fontenc}
\usepackage{amsmath}
\usepackage{ifthen}
\usepackage{mathrsfs}
\usepackage{psfrag}
\usepackage{graphicx}
\numberwithin{equation}{section}

\begin{document}
\title{PBNF-transform as a formulation of Propositional Calculus, I}
\author{Pelle Brooke Borgeke}
\address{Linn\oe us university}
\curraddr{}
\email{pelle.borgeke@lnu.se}
\subjclass[2010]{Primary}
\keywords{Operators, Normal Forms, Dualization, PBNF-transform, Polynomial families}
\date {05 February 2026}
\begin{abstract} Here, in a series of articles, we show methods for calculating propositional statements using algebraic polynomials as \emph{symbols} for the connectives, which are named \emph{operators}. These polynomials originate from the transformation between the \emph{principles of duality} and the \emph{Disjunctive Boolean Normal Form}, DBNF, and they appear if we use a geometrization in the unit square and simple algebraic methods, modulo 2. This we call the PBNF-transform. PBNF stands for \emph{Polynomial Boolean Normal Form} as these families are based on DBNF involved here. In the first paper in this series, we show that statements can be mapped \emph{bijectively} into different polynomial families $g=g(p_j,q_k,a)$ belonging to $\mathcal{H}(g)$, which we call the \emph{The House} of PBNF. We can also replace the connectives of logic with PBNF, as the polynomials are, in fact, a \emph{geometrization} of these connectives; the systems are isomorphic. The benefit of this formulation of the Propositional Calculus(PC) is a near trivialization of the methods. No axioms are needed, no truth tables, just a list of polynomials (which in themselves are self-explanatory), the only law of inference is the rule of Substitution. 
\end{abstract}
\maketitle
\section{Introduction: Fundamentals}
We shall here use a \emph{Polynomial Boolean Normal Form}, short PBNF, for a mathematical formulation of the \emph{classical} Propositional Calculus (with 2 truth values, 1 or 0). Using standard mathematical techniques, we effectively address various problems in mathematical logic, such as proving statements and theorems, and even finding new ones. Our view is based on transform theory, so we use dual spaces for the PBNF-\emph{transform}, transforming from the DBNF(Disjunctive Boolean Normal form) formulation and the duals in the operator space. This gives us Boolean polynomials, algebra modulo 2, that use only one inference rule, namely the \emph{Rule of Substitution}. The typical axioms of logic are not needed, as the method uses a geometry of the unit square. Actually, if we instead refer to a theorem we prove here, we can see the substitution as a linearity process given in the theorem; in this way, the system has \emph{no logic rule of inference}. In fact, the PBNF-transforms could also be used directly as polynomial operators instead of the logical connectives, as PBNF is an isomorphic geometrization of the connectives.
The idea is to connect the \emph{principles of duality}, used by Church[3], and others, which are transformations between different operator spaces, here called $\mathcal{OP}$ (with index $_*$ for a normal space and $_{**}$ for a complement space) $$\mathcal{OP}_* \ni (\land, \lor, \Rightarrow, f) \mapsto (\lor,\land \not \Leftarrow, t) \in \mathcal{OP}_{**}.$$ If we see this transformation in Disjunctive Boolean Normal Form (we use $x$ and $y$ for $p$ and $q$ in other BNF), we get, if we take, for example, the conjunction above $(x\land y) \mapsto (x\land y) \lor (x\land y') \lor (x'\land y)$. The next step is to transform the DBNF:s to the polynomial families $g(p_i,q_j,a_0)\in \mathcal{H}(g)$, which we call \emph{the House} of PBNF. We can then write \begin{align}p \land q \mapsto pq \in g(p,q,1) \mapsto pq+p(q+1)+(p+1)q \\ =(p+1)(q+1)+1 \in g(p',q',1'),\end{align} which now is on PBNF. This \emph{dual-normal form-polynomial} transformation\footnote{Some attempts in this direction have been made in examples by Stoll [8] and Halmos [4], and in Bergmann [2], we find another arithmetic method with min and max calculations.} which we call the PBNF-transform, gives a Boolean algebra, modulo 2, turning propositional calculus into algebraic calculations. A mapping $f$ takes the arguments in $\mathcal{OP}$, which here consists of classes of operators, null-class $(p,q$, operands or letters), singular and binary operators, single statements and compound statements, which we denote by $S_{c(k,l)},$ and give us linear and quadratic polynomial forms, so that we could, at once, calculate, in a trivial fashion, the statement with the algebraic methods in $\mathbb{Z}_2$.
We can change, to fit the statement, to a different polynomial family in the \emph{House}. This can be done, for example, by choosing the complement of a family, renaming truth values, or using different inputs for the operators, which we write as vectors, $op_{\boldsymbol{x}(x_1,x_2,x_3,x_4)}$ or just $op_{\boldsymbol{x}}$. The input $(p_i,q_j)$ is decided by the \emph{Selectors}, $\mathcal{S}$, to the (binary) polynomial families in $\mathcal{H}(g).$ In the \emph{protocol} (rules and conditions) below, we outline how it is done. ($(\cdot)$:=for all, [$\cdot]$:=exists, |:=XOR, $\boldsymbol{.}:=(\cdot),$ inner parenthesis.) \begin{align}(g)[\mathcal{H}][f][a_j][p_j,q_k]  \ \bigr(g \in \mathcal{H}, f:\mathcal{OP}\to (\mathbb{Z}_2 | \mathcal{H}(\mathbb{Z}_2)), f(p .op_{\boldsymbol{x}} \ q)\\ = g(p_{j},q_{k}, a_0) = a_1pq+a_2p+a_3q+a_4 =(1|0, \big{|}g(p_i,q_j,a_0)(\mathbb{Z}_2)\big{|} \ \hookrightarrow \mathcal{OP}), \\ a_{0\leq n \leq 4}=0|1, i=1,2,3,4 \land j=1,2. \end{align}
We observe that the range of $f$ is real valued both in $\mathbb{Z}_2$ and in $g \in \mathcal{H}$. The polynomial $(a_1pq+a_2p+a_3q+a_4)(1,0)$ can be evaluated for $\mathbb{Z}_2$ if we do not reach a tautology (1) or contradiction(0). It can also be used in the inverse action(often here called the \emph{fiber} or the \emph{pull back}) $g^{-1}(a_1pq+a_2p+a_3q+a_4) \hookrightarrow \mathcal{OP}$.
For example, one may wonder if the following statements are equal $$(p \land q) \lor r = p \land (q \lor r).$$ 
The transformations, for both sides of the equality are in the family $g(p,q,1)$: $$\mapsto (p^S+1)(q^S+1)+1= p^Sq^S .$$
The polynomials after substitution \begin{equation} (pq+1)(r+1)+1 = p(q+1 .r+1 +1).\end{equation} The statements are not equal, because they give different polynomials in the same family. If we solve (1.6) and simplify, we get $r=pr$ , and for the spin of, the true statement $$(p \land q) \lor (p\land r) = p \land (q \lor .p\land r).$$ We can now substitute back to the letters $p$ and $q$ by putting $r=q$ to get \footnote{With $p\lor q \mapsto (p^S+1)(q^S+1)+1$, $p\land q \mapsto pq^S $ and $S$ for substitution, we get LS:$(pq+1)(pq+1)+1=pq,$ RS$:p(q+1 .pq+1+1)=p(pq+q+pq+1)+1)=pq.$} $$(p \land q)\lor(p\land q)=p \land (q\lor.p\land q)$$ which also is true. PBNF is a canonical form, which means that two logically equivalent formulas convert to the same PBNF, easily showing whether two formulas are identical for automated theorem proving. This is not the case, for example, in DBNF, $p \lor \lnot p$ is in DBNF but is not reduced to 1, although they are logically equivalent. In PBNF, we get \footnote{The polynomial $(p+1)p+1$ does not look like a disjunction, it is $(\lnot p \land p)'= p \lor \lnot p$ by De Morgan and used to get 1=T. If we take 0=T, we get $p(p+1)=0$ instead because in the space $g(p,q,0)$ we have $\land \mapsto \lor.$} when calculating the polynomial, with addition modulo 2 and $p^2=p$,$$(p \lor \lnot p) \mapsto (p+1)p+1=p^2+p+1=p+p+1=1\in g(p,q,1).$$  Or we see that $(p+1)p=0$ as the dual statement $(p \land \lnot p)$.
It is also possible to put the operator $\textbf {x}(x_1,x_2,x_3,x_4)$, which we think of as a vector, on Boolean matrix-form, $\left[ {x_1 x_2 \atop x_3 x_4} \right]$ to get a new view and, if we do so we find that we can take just $p$ or $q$ as a minimal primitive connective forming an adequate system (just $\uparrow$ and $\downarrow$ are considered as adequate mono-connectives in the vector case). We can also shift from the vector case to a system of matrices in our transformation, where $N(op_{\boldsymbol{x}})$ takes the numberstring from an operator,
$$N(\downarrow )=0001 \mapsto \left[ {00 \atop 01} \right] \mapsto \left[ {p+1 \ 0 \atop 0 \ q+1 } \right] \in g(p,q,1) = (p+1)(q+1).$$  
Observe that det $\left[ {0 0 \atop 0 1} \right]=0$, which means that the matrix is Boolean, which applies in all non-trivial cases(N$(\Leftrightarrow)=\left[ {10 \atop 01} \right]$ is Booelan but determinant is 1 for this unit matrix. Observe also that $\left[ {(p+1) \ 0 \atop   0 \ (q+1) } \right]^2=\left[ {(p+1) \ 0 \atop 0 \ (q+1) } \right]$ so this formulation stays Boolean.
A brief overview of the material: In Section 2, we start by looking at the operators, which are the arguments of our PBNF-transform, and we present more examples using the polynomial families. The proofs of the theorems come in Section 3, where we also go deeper into the material. 
The more advanced applications start, for example the generalization of the dual theorems found in Church [3].
\section {Operators as arguments for the PBNF-transform}
We shall now present our formulation of Propositional Calculus in some more details. This section can be seen as a more thorough introduction than the first. In the next chapter, we go deeper into the subject and prove the theorems. 
We start by discussing the different types of operators we start from. These operators are the arguments of the PBNF-transform that gives us the polynomials, so that for example $g_{(p_i,q_j,1)}(p\Leftrightarrow q)=p+q+1 \in g_{(p_1,q_1,1)}$. We begin with the \emph{Selectors}, $\mathcal{S}$, which we index here as $p_i, q_j, i=1,2,3,4, j=1,2$. Note that $q$ can have two faces $(1010-q_1, 0101-q_2)$ but $p$ four $(1100-p_1, 0110-p_2, 0011-p_3, 1001-p_4)$. (We use this order, although $(p_1)'=p_3,$ and $(p_2)'=p_4.$) We have here a formula when $p$ and $q$ can be used and not (recall that we must not have 0 or 1 when we add the numberstring, 1100+0011=1 or 1100+1100=0, but 1100 1010=0110 works). We get \begin{equation}\left\{  
\begin{array}{ll}
p_i+ p_j=q_{i+j}\\
p_i+ q_j=p_{i+j}\\
\end{array}\right.
\end{equation}
This shows that we must not use two of the same, or $p|q$ and the pullback of $p|q$. The Selectors must give two homogeneous and two heterogeneous combinations of the elements $1$ and $0$. We have $p_1(1,1,0,0)$ and $q_1(1,0,1,0)$ which we call the $1^{st}$ and $2^{nd} Selector$ (usually just $p$ and $q$) with different truth values (here $1=$ True and $0$=False, note we also use the opposite by renaming $0=$ True and $1$=False) for example in the maps \begin{equation} \frac {\Rightarrow}{\Leftrightarrow} (\begin{smallmatrix}p_1 & 1 & 1 & 0 & 0\\ q_1 & 1 & 0 & 1 & 0 \\ \end{smallmatrix}) = (\begin{smallmatrix}\Rightarrow  & 1 & 0 & 1 & 1\\ \Leftrightarrow & 1 & 0 & 0 & 1\\ \end{smallmatrix}) \quad \frac {\Rightarrow}{\Leftrightarrow} (\begin{smallmatrix}p_3 & 0 & 0 & 1 & 1\\ q_{2} & 0 & 1 & 0 & 1 \\ \end{smallmatrix})= (\begin{smallmatrix}\Leftarrow  & 1 & 1 & 0 & 1\\ \Leftrightarrow & 1 & 0 & 0 & 1\\ \end{smallmatrix}).\end{equation}
In the next shift we find that $\Leftrightarrow$ turn into $p$ and $p'$ which give us the idea that these selectors are also binary operators (we shall explore that later).  \begin{equation} \frac {\Rightarrow}{\Leftrightarrow} (\begin{smallmatrix}p_ 4& 1 & 0 & 0 & 1\\ q_1 & 1 & 0 & 1 & 0 \\ \end{smallmatrix})= (\begin{smallmatrix} \Leftarrow  & 1 & 1 & 1 & 0\\ p & 1 & 1 & 0 & 0\\  \end{smallmatrix}) \quad \frac {\Rightarrow}{\Leftrightarrow} (\begin{smallmatrix}p_4 & 1 & 0 & 0 & 1\\ q_2 & 0 & 1 & 0 & 1 \\ \end{smallmatrix})= (\begin{smallmatrix} \uparrow  & 0 & 1 & 1 & 1\\ p_3 & 0 & 0 & 1 & 1\\ \end{smallmatrix})\end{equation}
This technique helps us choose the most convenient polynomials for a specific statement.
We write a \emph{truth-valued vector operator} as $op_{\boldsymbol{x}(p_n,q_n)(x_1,x_2,x_3,x_4)}$, in the case $\Rightarrow $ we get $op_{\boldsymbol{x}1,0,1,1}$ where $x(1,0,1,1)$, is the range for the different inputs of the selectors, and we omit the selectors which use to be clear from the context. A value in the range for the operator, or an output, we often refer to as the $\emph{vote}$ or the $\emph{pick}$ of the operator. 
The geometrization of the algebra of logic takes place in a Venn diagram of the unit square (for more of this, and lattices too, consult [10]). Two selectors represent a partition of the unity in 4 different areas (1, 2, 3, and 4) of two sets, we take $p_1(1100)$ and $q_2(1010)$, (we call $x_1x_2x_3x_4, x=0| 1$ for a number string of the operator, which is given on vector form), with a non-void intersection in $p\cap q)$. This is area number 1. Number 2 is $p \setminus q$, number 3 is $q \setminus p$, and the complement $(p' \cap q')$ is number 4. If we compare $p(1100)$ and $q(1010)$, we see that each number represents an area, with $p$ to the left of $q$, so it is clear which areas belong to $p$, to $q$, or to both. Every binary operator stands for specific areas; for example, the disjunction $\lor$ has the number string (1110), so it occupies every area but 4. 
We can thus change to different operator spaces when we change inputs. In all, a binary operator space $\mathcal{OP}$ contains $16$ different binary operators with a four-place string $(1|0,1|0,1|0,1|0)$ output. In the case when $p=q$, we have only 2 areas in the unit square: $p$ and $p'$. We then let $p$ be area 1 and $p'$ be area 2. This geometry is used for singular operators, which we have $4(1|0,1|0)$ of, some of which are not named or considered in most of the literature, more than the negation, which we here often refer to as the \emph{pull back action}; however, they can be used in different situations in the Calculus of Statements we work with. The protocol for singular operators is ($[\cdot]$:=exists, |:=XOR)  \begin{align}[f][\mathcal{H}][a_j][\text{T|F}][p_j|q_k] \bigr(f:\mathcal{OP}\to (\mathbb{Z}_2 | \mathcal{H}),f(op_{\boldsymbol{x}(x_1,x_2}):= f(p_j|q_k,a_0)(op_{\boldsymbol{x}})\\= a_1p|q+a_0 =(1|0, \big{|}f(a_1p_j|q_k,a_0)),  a_{n=0,1}=0|1,\\ (g(p_j|q_k,a_0))'=g(p_j'|q_k',a_0'), g(p_j|q_k,\underline{a_0}) \mapsto (1|0=\text{(T|F)|(F|T)}), \\ p_j|q_k\in \mathcal{S}=(1|0) \bigl). \end{align} Here we can use either $p$ or $q$ because the polynomial family is linear in $p$ or $q:(0,p|q,p|q+1,1).$ We use $f$ instead of $g$ for these linear polynomial families.
Propositional or Statement calculus here means that we work with $\mathcal{H}(f,g)$ instead of $op_{\boldsymbol{x}(x_1,x_2,x_3,x_4)}(p_n,q_n) \in \mathcal{OP}.$ Every operator is bijectively mapped into $\mathcal{H}(f,g)$, the \emph{House} of linear and quadratic polynomial in $p$ and $q$, so for example for the bi conditional\footnote{It is not allways the case that $\lor \mapsto +$, for this to happen we need disjoint sets, $x\cap y= \emptyset.$ If not, $x \lor y \mapsto p+q+pq=(p+1)(q+1)+1 .$}, and here we show the middlestep in DBNF, \begin{align}op_{\boldsymbol{x}(1,0,0,1)}(p_1,q_1) \mapsto (x\land y) \lor (x'\land y') \mapsto pq+(p+1)(q+1) \\ =p+q+1  \in g(p,q,1)\subset\mathcal{H}(f,g). \end{align} But in another family, we get \begin{equation}op_{\boldsymbol{x}(1,0,0,1)}(p_1,q_1) \mapsto p+q \in g(p',q',0)\subset\mathcal{H}(f,g). \end{equation} as this is the complement of the first case. To calculate a statement means to prove or disprove it, then it is called a tautology (Taut=1/0), true for every combination of input, or a contradiction (Taut=0/1)$'$. We want to know if have a tautologi in $(p \dot \land(.p\Rightarrow q) \dot\Rightarrow q$, so we map this into \footnote{Here $p\Rightarrow q=\lnot p \lor q=(p\land \lnot q)'\mapsto p(q+1)+1 \in g(p,q,0)$, and if in $g(p',q',0)$ we have $(p+1)q$.} \begin{align} (p \dot \land .p\Rightarrow q) \dot\Rightarrow q \mapsto p^S(q+1)+1 \in g(p,q,1)\\ =p(p . q+1+1)(q+1)+1p=(p.q+1+p)(q+1)+1\\=pq(q+1)+1=1.\end{align} The mapping from the principal operator, here marked with a dot for $(\Rightarrow)$, is a conditional, and S means we substitute for $p$. There we have a conjunction, which is principal: $p\land q \mapsto pq^S$. Finally, we substitute for the conditional $q^S=(p(q+1)+1)$, and  we get (1.6) with the help of Boolean arithmetic where $x+x=0$ and $x \cdot x=x.$
With PBNF, we can also, as we saw before, get a bonus for achieving results when the calculation yields values other than 1/0, e.g., an elimination of connectives (we use $\to$ | $\Rightarrow$)  $$ (.A\land B \to C)\Leftrightarrow (A\to .B\to C).$$
Let's say that we want to eliminate the conjunction to the left to prove the statement. $(A(p)\land B(r))\to C(q)) \mapsto p^S(q+1)+1=pr(q+1)+1.$
The right side $A(p)\to (B(r)\to C(q)) \mapsto p(q^S+1)+1=p(r.q+1+1+1)+1=pr(q+1)+1.$
Of course, we can prove the whole statement true. Then we shall ad LS+RS+1, as $p\Leftrightarrow q \mapsto p+q+1 \in g(p,q,1)$. Thus we get $$((A\land B)\to C)\Leftrightarrow (A\to (B\to C)) \mapsto p^S+q^S+1$$ $$=(pr(q+1)+1)+(pr(q+1)+1)+1=1$$ and this means that we have a tautologi. We could also rewrite $$pr(q+1)+1=p(r(q+1))+1=p(r(q+1)+1)+1)+1$$ to eliminate $pr \hookrightarrow p \land r$. \footnote{Here, perhaps $g(p',q',0)$ would be simpler to use, $p \to q\mapsto (p+1)q$, but we also have $\land \mapsto (p+1)(q+1)+1 \in g(p',q',0).$  We often call $g^{-1}$ for the \emph{fiber} and we use $\hookrightarrow.$} \begin{definition} The connectives or the operators belong to the class of operators which we name $\mathcal{OP}$. We have 4 singular operators that are connected to one statement letter, one, and the most used of them is the negation $(\neg p)$, which we number as $1-4).$ They belong to $\mathcal{OP}_1(op_{\boldsymbol{x}(x_1,x_2)})$. There are 16 binary operators (numbered 5-20) that connect statement letters (literals, operands), e.g. $p\land q$, which is one of the most common $(\vee, \wedge, \Rightarrow, \Leftrightarrow) \in \mathcal{OP}$. They belong to $\mathcal{OP}_2(op_{\boldsymbol{x}(x_1,x_2,x_3,x_4)}).$
We divide these binary connectives into the primitive operators, $\mathcal{OP}_{P}$ needed to start with, where different choices can be made (for example, $\lnot,\land$ or $\lnot,\lor$). \footnote{These primitive connectives are often argued to be as few as possible to bring clarity to the Propositional Calculus (PC), but it is also noted that this reduction brings problems as the statements get more complicated. One of these \emph{1-primitive formulations} uses the \emph{Sheffer stroke}, often a vertical line |, but here we use $\uparrow$. We shall later give examples of this, showing that Selector$(p|q)$ can be used as a 1-primitive if we may use matrix algebra with addition and multiplication. The “primitive polynomial” in PBNF can be thought of as $(p, q, 1)$, two Selectors, and 1 for binary operators and $(p, 1)$ or $(q, 1)$ for singular operators. The idea of 1-primitive has, in fact, no importance in the polynomial formulation, as there is just this choice to be made. Instead we see $(p, q, 1)$ as \emph{generators} of the full polynomial, $pq+p+q+1=(p+1)(q+1) \hookrightarrow p\uparrow q $ (nondisjunction, $\lnot \lor$). The other 1-primitive we have is $pq+1 \hookrightarrow p\downarrow q$ (non-conjunction, $\lnot \land$), which shows examples of the polynomial as an algebraic-geometric formulation of PC.} Then there are normal form operators $ \mathcal{OP}_{NF} (\vee, \wedge, \neg$) and the group we call derived operators $\mathcal{OP}_{D}(\Rightarrow, \not \Rightarrow, \Leftarrow, \not\Leftarrow, \uparrow, \downarrow $). These are all unevenly heterogeneous, so that we could get an even operator by adding number strings.
Next group is the $\emph{Selectors}$, these are all the operators with two 1 and two 0 (equally heterogeneous), so that they could act as inputs $\mathcal{OP}_{S_1}(p, q, p', q')$ and $\mathcal{OP}_{S_2}(\Leftrightarrow, | $) or just $\mathcal{OP}_{S}=(p_1, p_2, p_3, p_4; q_1, q_2).$  The selectors $\mathcal{OP}_{S_2}$ are usually treated only as binary operators. For $\mathcal{OP}_{S_1}$, it is the opposite; these operators are usually just inputs, but we show here that they also could be seen as binary operators, with special qualities.
The fourth group consists of the Trivial operators \footnote{In most formulations of PC the Trivial operators are just there, but they are not that trivial, as they are needed if you use 1 | 0 as truthvalues, and even if you use T/F (or t/f) these letters just appear to be there. In Church´s formulation $P_1$, the primitives are $[\Rightarrow]$ (the brackets and the conditional) and the primitive constant $f$. He starts by defining $(\to $ := stands for), $1 \to f \Rightarrow f$ and then $(\lnot p \to p \Rightarrow 0 $, which is enough, and quite slick. The trivial operators can be put between letters to form a lattice of \emph{logical operators}, which make an evaluation over the inputs depending on their truthvalues. $T_0$ does not believe in any input, but $T_1$ belives in every input. Other operators in this group is $\land$ and $\lor$, but also the Selectors, $p_1, q_1$. More of this in coming papers.} $\mathcal{OP}_{T_1}$ and $\mathcal{OP}_{T_0}$ (1 and 0), which are the only homogeneous binary operators. Now we can form a linear order because the first operator can build the next one, and so on. \begin{align} \mathcal{OP}_{P}-\mathcal{OP}_{NF}( \land, \lor \lnot) - \mathcal{OP}_{S}(p,q,p',q',\Leftrightarrow, | ) \\-\mathcal{OP}_{D}(\Rightarrow, \not\Rightarrow,\Leftarrow, \not\Leftarrow, \downarrow, \uparrow ) - \mathcal{OP}_{T_1}\subset \mathcal{OP}. \end{align} This is not a strict order; for example, you could construct the normal form operators by using $p\Rightarrow q = \lnot p \lor q$ and  $p \not\Rightarrow q =  p \land \lnot q.$  Observe that this order is outside of the normal form order, which is ordered by the numbers of 1 or 0 in $op_{\boldsymbol{x}}$ $$\longrightarrow \text{Increasing order of ones}$$ $$ op_{\boldsymbol{0}}(\iota_0) \subset op_{\boldsymbol{1}}(\land, \not\Rightarrow, \not\Leftarrow, \downarrow)\subset op_{\boldsymbol{2}}(p,q,p',q',\Leftrightarrow, |)$$  $$op_{\boldsymbol{3}}(\lor \Rightarrow, \Leftarrow, \uparrow)\subset op_{\boldsymbol{4}}(\iota_1)$$ $$\longleftarrow \text{Increasing order of zeroes}$$ This will be shown later when we present the lattices in the different cases, and, as noted before, it is possible to shift $0\mapsto 1$ and $1\mapsto 0$ for the truth and false values.
The operators are used to create formulas or statement forms and result in the $\emph{operator space}$. They could be connected to the duals in the operatorspaces $\mathcal{OP}_x$ or directly to $\mathcal{H}(f,g)$, which is the family of polynomials associated with the operator space, $\mathcal{OP}_x$. The quantifiers $\forall x$ or here $(x)$ and $\exists$ or here $[x]$ are usually not called operators but $\emph{constants}$ in $\mathcal{OP}$ together with the \emph{primitive} “=” and $\in$. Here, we also refer to them as null-class operators. While operators are put between statements to form other statements, “$\in$” and “=” are put between constants to form statements. For example we form from the constants in $\mathcal{OP}$, $p,q,r,s,p_1,q_1,r_1,s_1 \ldots$, the statement $p=q$ or $p\in q$. We also put $p,p'$ and $q,q'$ in $\mathcal{H}(f,g)$, which have a double nature of being both constants, statement letters, (and we could use p and q if necessary) and operators (The Selectors $p, q, p',q'$). When we have this group, the fifth in $\mathcal{OP}$, we get a total of 24 operators: 4 are singular, 10 are ordinary binary, 2 are trivial, 4 are bi-constants (both selectors/operators and statement letters), and 4 are constants. We also refer to the statement letters, the \emph{operands}, as null-class operators, group 5', they start at 25 and up. 
\end{definition} \begin{table} \begin{tabular}{c r l}
\emph{The operator space} $\mathcal{OP}$ & $Number$& $ Names$\\
\hline
$(1)$ Singular operators  & $1-4$ & $=, -,\lnot, + $\\
$(2)$ Binary operators & $5-14$ &$\land,\lor, \Rightarrow \Leftarrow , \not \Rightarrow, \not \Leftarrow, \Leftrightarrow, |, \downarrow, \uparrow    $\\
$(3)$ Trivial operators & $15-16$ & $Zero(\iota_1), One (\iota_0)$ \\
$(4)$ Bi-constant operators &$17-20$ &$p, q, p',q'$ \\
$(5)$ Constants(null-class operators)  & $21-24$ & $\in,=, (\cdot)$:= for all, $[\cdot]$:= exists\\
$(5')$ Operands (null-class operators)  & $25-$ & $p,q,r,s,p_1,q_1,r_1,s_1 \ldots $\\
\end{tabular} \end{table}  \begin{remark}It can be noticed for the conditional $p\Rightarrow q$ that this operator belongs to a family of $\emph{Conjugate Sentences},$ by Tarski [9], along with the converse $p\Leftarrow q,$ the inverse $\lnot p\Rightarrow \lnot q$ and the contrapositive $\lnot p\Leftarrow \lnot q$. This will be shown when we later prove $\mathcal{H}^{*}$ where these conjugate sentences are transformed to polynomials in $p$ and $q$. We can already see what will happen, and the polynomials on the far right show the connections.
\end{remark} \begin{scriptsize} \begin{tabular}{c c c c}
\emph{Conjugate Sentences} & $Name$& $ Polynom \ g(p,q,1)$& $Polynom \ g(p',q',0)$ \\
\hline
$(1) p\Rightarrow q $ & $Conditional$&$p(q+1)+1$ & $(p+1)q$ \\
$(2) \lnot p\Rightarrow \lnot q$ & $Inverse$ &$(p+1)q+1$&$p(q+1)$\\
$(3) p\Leftarrow q $& $Converse$&$(p+1)q+1$& $p(q+1)$ \\
$(4) \lnot p\Leftarrow \lnot q$&$Contrapositive$ & $p(q+1)+1$ &$(p+1)q$.\\
\end{tabular} \end{scriptsize} 
We observe that $1=4$ and $2=3$ (to be proved in the next section), which gives us the opportunity to choose for the proof of the equivalence $$p\Leftrightarrow q= (p\Rightarrow q \land p\Leftarrow q), \text{Conditional-Converse},$$ the following $$p\Leftrightarrow q = (p\Rightarrow q \land \lnot p \Rightarrow \lnot q), \text{Conditional-Inverse},$$ $$p\Leftrightarrow q= (\lnot p\Leftarrow \lnot q \land p\Leftarrow q), \text{Contrapositive-Converse},$$
$$p\Leftrightarrow q= (\lnot p\Leftarrow \lnot q \land \lnot p \Rightarrow \lnot q), \text{Contrapositive-Inverse}$$.
\section{The polynomial families in the \emph{House} of PBNF}
What we have so far will give some examples of proof and definitions technique, where the statement calculus is working out. Later, we will go deeper into the material, and prove the theorems we use.  
Here, we study and use the four most natural families with linear polynomial $f(p,a$) \begin{align}1. f(p,1),  \text{Little normal family} : \text{normal input and reading}. \\ 2. f(p,1)\mapsto f(p,1'), \text{Little complement family} :  \text{reading 1(T) to 0(T)}. \\ 3. f(p,1)\mapsto f(p',1), \text{Little pullback family} : \text{input $p$ to $p'=p+1$}. \\ 4. f(p',1)\mapsto f(p',0), \text{Little pullback complement family} : \\ \text{change input $p$ and reading}.\end{align}
We will comback to explain the rules and the conditions in this family, that not coinside (recall that we have a different geometry here, just $x=p$ and $\lnot x= p+1$) with the quadratic polynomials $\in \mathcal{H}(g)$. We call \begin{align}1. \mathcal{H}^{*} g(p,q,1), \ \text{Normal Family},\\ 2. \mathcal{H}': g(p,q,1'=0), \ \text{Complement Family},\\  3. \mathcal{H}^{''}: g(p',q',1),\ \text{Pullback Family},\\  4. \mathcal{H}^{**}: g(p',q', 1'), \ \text{Pullback Complement Family} \end{align} \begin{remark} Note that $'$ means adding 1=1111, to the polynomial binary forms, negating or pull back a statement. We collect the polynomial families into two groups, depending on their truth-value order: $(1,0)$, \emph{normal families}, or $(0,1)$, \emph{complement families}. Of course, there are hybrid families, for example, $g(p,q',0)$ or $g(p',q, 1)$, which can be convinient to use sometimes. We are not going to investigate all the polynomial families as we can see the pattern just using the first four, the other ones are just shifting places of $op_{\boldsymbol{x}(x_1,x_2,x_3,x_4)}$ or changing values for truth and false. To actually count the number of families, we start with the first selector $p$, which has four permutations $(1,1,0,0),(0,1,1,0)$, $(0,0,1,1)$, and $ (1,0,0,1)$, the Second Selector $q$ has two $(1,0,1,0) (0,1,0,1)$. If we allow all six possibilities  $(p,q,p',q',p^{|},p^{\Leftrightarrow })$ as selectors, we get $6\cdot 4=24$ possibilities of dual-spaces  (recall that the complement($\textbf{x+x$'$}=1$) and the operator ($\textbf{x+x}=0$), or two of the same do not work together). These families can be read from bottom (normal space) or top(complement space) in their lattices meaning that if we use $(1,0)$ or $(0,1)$ as showing True and False we get $48$ dual spaces or that we have 47 dual spaces to the base-space that use $1^{st}$ and $2^{nd} Selector$ as inputs. \end{remark} \begin{definition} The polynomial family has four $\emph{particles}$ or simple forms made up with the variables $p$ and $q$ that correpond to the DBNF$(x(p),y(q))$ $pq=x\land y, p(q+1)=x\land y', (p+1)q=x'\land y$ and $(p+1)(q+1)=x'\land y'q$. These DBNF:s are usually named $\emph{phrases}$ and in CBNF $\emph{clauses}$ and the transformation is by De Morgan negating so e.g. $\lnot(x'\lor y')=x \land y $. In the polynomial families we instead add 1, $(p+1 .q+1 +1)'=(p+1+1)(q+1+1)+1+1=pq \ (\text{modulo} \ 2).$ \end{definition} Here is a table of the most common polynomial families described above. In the next Section, we prove this schema.
{\tiny \begin{center}
\begin{tabular}{||c | c |c |c |c ||}
\hline
$OP$ & $\mathcal{H}^{*}: g(p,q,1)$&$\mathcal{H}': (g(p,q,1'))$ & $\mathcal{H}''(p',q',1)) $ & $\mathcal{H}^{**}: (g(p',q',1')$ \\ [0.5ex]
\hline\hline
$ p $ & $p(1,1,0,0)$ &$p'$ & $p'$& $p$\\ 
\hline
$ p' $ & $p'(0,0,1,1)$ &$p$ & $p$& $p'$\\
\hline
$ q $ & $q(1,0,1,0)$ &$q'$ & $q'$& $q$\\
\hline
$ q' $ & $q'(0,1,0,1)$ &$ q$ & $q$& $q'$\\
\hline
$\neg p $ & $p+1$ &$p$ & $p$& $p+1$\\
\hline
$p\vee q $& $(p+1)(q+1)+1$ &$(p+1)(q+1)$ & $pq+1$& $pq$ \\
\hline
$p\wedge q$ & $pq$ & $pq+1$ & $(p+1)(q+1)$& $(p+1)(q+1)+1$\\
\hline
\hline
$p\Rightarrow q$  & $p(q+1)+1$ & $p(q+1)$ & $(p+1)q+1$ & $(p+1)q$ \\ 
\hline
$p \not\Rightarrow q$  & $p(q+1)$ & $p(q+1)+1$ & $(p+1)q$ & $(p+1)q+1$ \\
\hline
$\lnot p\Rightarrow \lnot q$  & $(p+1)q+1$ & $(p+1)q$ & $p(q+1)+1$ & $p(q+1)$ \\
\hline
$p \Leftarrow q$  & $(p+1)q+1$ & $(p+1)q$ & $p(q+1)+1$ & $p(q+1)$ \\
\hline
$p \not\Leftarrow q$  & $(p+1)q$ & $(p+1)q+1$ & $p(q+1)$ & $p(q+1)+1$ \\
\hline
$ \lnot p \Leftarrow \lnot q$  & $p(q+1)+1$ & $p(q+1)$ & $(p+1)q+1$ & $(p+1)q$ \\
\hline
\hline 
$p\downarrow q$ & $(p+1)(q+1)$ & $(p+1)(q+1)+1$ & $pq$& $pq+1$\\
\hline
$p\uparrow q$  & $pq+1$ & $pq$ & $(p+1)(q+1)+1$ & $(p+1)(q+1)$ \\
\hline
$p\Leftrightarrow q$ & $(p+1)+(q+1)+1 $ & $(p+1)+(q+1) $ & $(p+1)+(q+1)+1$ & $(p+1)+(q+1) $ \\
\hline
$p | q$ & $(p+1)+(q+1) $ & $(p+1)+(q+1)+1 $ & $(p+1)+(q+1)$ & $(p+1)+(q+1)+1 $ \\
\hline
$\iota_1 $ & $(p+1)pq+1 $ & $(p+1)pq$ & $p(p+1)(q+1)$ & $p(p+1)(q+1)+1 $ \\
\hline
$\iota_0$ & $(p+1)pq $ & $(p+1)pq+1$ & $p(p+1)(q+1)+1$ & $p(p+1)(q+1) $ \\ [1ex]
\hline
\end{tabular}\end{center} }
But first, some definitions and examples to clarify things. \begin{definition} By a statement, we mean a declarative sentence that can be classified as true (T or 1/0) or false (F or 0/1), and this is called the sentence's truth value. For every statement, we can name them $p,q,r,s,p_1,q_1,r_1,s_1, \ldots$ and the single statements we name \emph{letters}, \emph{literals}, or \emph{null-class} operators. The literal can be combined with connectives or operators to form compound statements. \end{definition} \begin{definition} The statements which we group in \emph{statement forms}, $S_{c(k,l)},$ are of different classes depending on how many different letters and how many connectives they have. In the class $S_{c(k\ge 0, \ l\ge 1)},$ where $k$ is counting the number of connectives and $l$ is counting the number of different literals. With $c$, we indicate that the Statement Calculus is classical, meaning we have two truth values: 1 and 0. In some non-classical systems, we can have several truth values, and the statement has a dependence on distance($x$) or time($t$), and we then use $S_{n(k,l)}(x,t)$ and $n \in \mathbb{N}$ is then the number of truth values. We also use other capital letters, such as $T_{c(k,l)}$ or $R_{c(k,l)}$ to indicate different forms. \end{definition} \begin{definition} A statement form $S_{c(k,l)}$ \emph{logically implies} or \emph{entails} another statement form $T_{c(k,l)}$ iff every assignment of (1,0) making $S$ true also makes $T$ true. We  write $S_{c(k,l)} \vdash T_{c(k,l)}$ for this fact.
$S_{c(k,l)}$ is a \emph{tautologi} if it only takes the value 1/0(true) for every input of truth values. $S_{c(k,l)}$ is a \emph{contradiction} if it only takes the value 1/0(false) for every input of truth values. \end{definition} \begin{theorem} $$(S_{c(k,l)} \vdash T_{c(k,l)}) \Leftrightarrow (S_{c(k,l)} \Rightarrow  T_{c(k,l)}) \ \text{is a tautologi}.$$ \end{theorem}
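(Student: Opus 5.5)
The plan is to pass to the normal family $\mathcal{H}^{*}: g(p,q,1)$ and read everything off polynomials in $\mathbb{Z}_2$. By the bijectivity of the PBNF-transform, the statement form $S_{c(k,l)}$ maps to a Boolean polynomial $s$ in its $l$ letters, and $T_{c(k,l)}$ maps to a polynomial $t$. The value of $S$ under an assignment of truth values to the letters is exactly the evaluation of $s$ at the corresponding point of $\mathbb{Z}_2^{l}$, and likewise for $T$ and $t$. Since the conditional is transformed as $p\Rightarrow q \mapsto p(q+1)+1$ in $g(p,q,1)$, the rule of Substitution gives
$$S_{c(k,l)}\Rightarrow T_{c(k,l)} \mapsto s(t+1)+1 .$$
Being a tautology then means that this polynomial reduces to $1$, that is, $s(t+1)=0$ as a Boolean polynomial with $x^2=x$. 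Because PBNF is canonical, this is the same as $s(t+1)$ vanishing at every point of $\mathbb{Z}_2^{l}$.

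For the direction $(\Rightarrow)$, assume $S\vdash T$. At any point where $s=1$, the definition of entailment forces $t=1$, so $t+1=0$ and $s(t+1)=0$. At any point where $s=0$, the product $s(t+1)$ is trivially $0$. Hence $s(t+1)$ vanishes identically, the transformed conditional equals $1$, and $S\Rightarrow T$ is a tautology.

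For the direction $(\Leftarrow)$, assume $s(t+1)+1=1$, so that $s(t+1)=0$ at every assignment. Take any assignment that makes $S$ true, so that $s=1$ there. Then $t+1=0$, which gives $t=1$, so $T$ is true there as well. This is exactly $S\vdash T$.

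The main obstacle is not the algebra but justifying that evaluation commutes with the transform. One needs that the polynomial obtained by repeated substitution agrees pointwise with the truth value of the compound statement. I would prove this by induction on the number $k$ of connectives. The base case is a single letter, which is its own Selector. The inductive step checks each binary operator against the table of $\mathcal{H}^{*}$, which amounts to comparing the four values of each polynomial on $\{0,1\}^2$ with the number string $op_{\boldsymbol{x}}$. Finally, I would remark that in the complement families the same argument goes through with $1$ and $0$ swapped, using $p\Rightarrow q\mapsto p(q+1)$ in $g(p,q,1')$.
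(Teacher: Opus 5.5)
Your proof is correct. Its logical core is the same as the paper's: the conditional is false only at the input (true, false), so $S\Rightarrow T$ is a tautology exactly when no assignment makes $S$ true and $T$ false, and that is the definition of $S\vdash T$.

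The routes differ. The paper argues directly at the level of truth values, reading this fact off the number string $1011$ of $\Rightarrow$, and uses no polynomials. You run the same argument inside $g(p,q,1)$, where $s(t+1)$ is the indicator of the bad event $S=1$, $T=0$. This costs you an extra lemma: the transform must commute with evaluation. Your induction on $k$ supplies it, resting on the fact that evaluation at a point of $\mathbb{Z}_2^{l}$ is a ring homomorphism compatible with $x^2=x$. In return, your version justifies what the paper's examples actually do, namely testing entailment by reducing the PBNF polynomial of $S\Rightarrow T$ to $1$.

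Two small points:
\begin{itemize}
\item The appeal to canonicity is not needed. The paper defines a tautology pointwise, and both of your directions are argued pointwise.
\item What you use is not bijectivity of the transform, which is not injective on statement forms (for example, $p$ and $p\land p$ both go to $p$). You only need that it is well defined and respects evaluation.
\end{itemize}
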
 \begin{proof} We can use the following from the introduction \begin{equation} (S_{c(k,l)} \vdash T_{c(k,l)})  (\begin{smallmatrix}p & 1 & 1 & 0 & 0\\ q & 1 & 0 & 1 & 0 \\ \end{smallmatrix}) (\begin{smallmatrix}(S_{c(k,l)}  & 1 & 1 & 0 & 0\\ T_{c(k,l)} & 1 & 1 & 1 & 0\\ \end{smallmatrix})\Leftrightarrow  (\begin{smallmatrix}S   & 11 & 10 & 01 & 00\\ \Downarrow \ T & 1 & 1 & 1 & 1\\ \end{smallmatrix}) \end{equation} This can be said by the following: $p \vdash q$ iff whenever $p$ is true, so must $q$. This means  $\lnot(p=1 \land q=0)$, so $p \Rightarrow q$ is never false, so $p \Rightarrow q$ is a tautology. \end{proof} \begin{example} We want to show $(p\vdash p) \Leftrightarrow (p\Rightarrow p)$. We have $\lnot(p=1 \land q=0)=\lnot (1\land 0)=0\lor 1 \mapsto (0+1)(1+1)+1=1$ for left side. $$1 \Leftrightarrow (p\Rightarrow p)\mapsto g(p,q,1):p+q^S+1=1+p(p+1)+1+1=1.$$\end{example} \begin{example} And here $$ (p \vdash .p \lor q) \Leftrightarrow (p\Rightarrow .p \lor q).$$ We use $g(p',q',0)$ for $(p \vdash .p \lor q)$ and  $\lnot(p=0 \land .p\lor q =1)$ gives $((p+1)(pq+1)+1)(0,1)=((0+1)(01+1)+1)=0.$ For the equivalens in $$g(p',q',0): p^S+q^S=0+(p+1)pq=0.$$ 
This proves the statement. We note that even if we do not have any fixed polynomial $g(\vdash)$, it was possible to construct one.\end{example} \begin{example}Next we use a special form $g(p,q',1)$ for $(p\land q) \vdash q \Leftrightarrow (.p\land q \Rightarrow  q)$.
Left side: $(p\land q) \vdash q$ means $\lnot(p\land q)=1 \land q=0)$ is out. We get $(p+1)^Sq^S=((p+1)q+1)q$ and $p+q=p+(p+1)q(q+1)+1=p+(p+1)=1.$
Here, we use a $g$-family specifically for this problem, showing that polynomial forms are very flexible.\end{example} \begin{example}Next one has many conditionals and we try $g(p,q',0)$ here $$(p\Rightarrow q\Rightarrow p)\vdash p \Leftrightarrow (p\Rightarrow q\Rightarrow p) \Rightarrow p.$$ This is mapped into $g(p,q',0):q^Sp=(p+1)(q+1)pqq=0$, which is an easy calculation when we try to get contradictions of the type $p(p+1)$ or $q(q+1)$ which are zero. \end{example} \begin{lemma} We now prove the distribute law $\in S_{c(5,3)}$ using $g(p,q,1)$ $$p\land(q \lor r)= (p\land q) \lor (p \land r)$$ \end{lemma}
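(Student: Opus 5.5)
The plan is to transform both sides of the distributive law into the Normal Family $g(p,q,1)=\mathcal{H}^{*}$. There we read off from the table $p\land q\mapsto pq$ and $p\lor q\mapsto (p+1)(q+1)+1=pq+p+q$, and we compute modulo 2 with $x^2=x$. The only inference step is the Rule of Substitution. The statement is proved once both sides reduce to the same polynomial, since PBNF is canonical (two logically equivalent formulas convert to the same PBNF). For completeness, I would also show that the biconditional LS$+$RS$+1$ evaluates to $1$, as was done in the introduction for $((A\land B)\to C)\Leftrightarrow(A\to(B\to C))$.

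For the left side $p\land(q\lor r)$, the principal operator is the conjunction, so it maps to $pq^S$. We substitute $q^S=(q+1)(r+1)+1=qr+q+r$ for the inner disjunction. Multiplying out with $p$ gives
\begin{equation*}
p(qr+q+r)=pqr+pq+pr .
\end{equation*}

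For the right side $(p\land q)\lor(p\land r)$, the principal operator is the disjunction, so it maps to $(p^S+1)(q^S+1)+1=p^Sq^S+p^S+q^S$. We substitute $p^S=pq$ and $q^S=pr$. This gives
\begin{equation*}
(pq)(pr)+pq+pr=p^2qr+pq+pr=pqr+pq+pr ,
\end{equation*}
using $p^2=p$. This is the same polynomial as the left side.

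Finally, I would place both sides in the biconditional $p\Leftrightarrow q\mapsto p^S+q^S+1$, which gives
\begin{equation*}
(pqr+pq+pr)+(pqr+pq+pr)+1=1 .
\end{equation*}
So the statement is a tautology in $g(p,q,1)$, which proves the lemma.

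I expect the main obstacle to be bookkeeping rather than any deep step. The point needing care is the substitution order: we must identify the principal operator on each side and substitute the correct inner polynomial for $p^S$ and $q^S$. The disjunction must be expanded as $(x+1)(y+1)+1$, not as $x+y$, because the two conjunctions overlap and are not disjoint, as the paper's footnote warns. Once that is respected, idempotence $p^2=p$ and cancellation $x+x=0$ finish the argument.
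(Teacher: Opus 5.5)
Your proposal is correct and matches the paper's proof: both sides are mapped into $g(p,q,1)$ via $pq^S$ and $(p^S+1)(q^S+1)+1$, and each reduces to $pqr+pq+pr=p(qr+q+r)$. Your extra check that the biconditional LS$+$RS$+1$ evaluates to $1$ is a harmless addition that the paper omits here.
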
 \begin{proof} We start on the left side, which is mapped to $pq^S$ $$p\land(q \lor r) \mapsto p.(q+1)(r+1)+1=(pq+p)(r+1)+p=pqr+pq+pr=p(qr+q+r)$$ Right side is mapped to $(p^S+1)(q^S+1)+1$ $$(pq+1)(pr+1)+1=pqr+pq+pr=p(qr+q+r)$$ \end{proof}  \begin{example} It is well known that the conditional does not give the converse, usually this is proved by giving a counterexample, which is enough to disprove a theorem, but perhaps it is not satisfactory. Here the $\cdot$ indicates the principal operator \begin{equation} (p\Rightarrow q) \dot{ \Rightarrow } (q\Rightarrow p) \end{equation} We map this into $$g(p,q,1): p^S (q^S +1)+1=p^Sq^S +1+ p^S.$$
The first multiplication and the complement 1 give $$p^Sq^S+1=(p.q+1+1)(.p+1 \ q+1)+1=p+q$$ and this is the XOR. We add and pull back $$p^S: p+q+p(q+1)+1=(p+1)q+1 \hookrightarrow q \Rightarrow  p$$ and find \begin{equation} (p\Rightarrow q) \dot{ \Rightarrow } (q\Rightarrow p)\vdash (q \Rightarrow  p). \end{equation} We see that, in fact, the converse is idempotent for the conditional and vice versa. We show this schematically, and recall that the converse and the conditional vote identically over the inputs $$(\Leftarrow )^{\Rightarrow} = \ \Leftarrow \ (\begin{smallmatrix}\Rightarrow & 1 & 0 & 1 & 1 \\ \Leftarrow & 1 & 1 & 0 & 1 \\ \end{smallmatrix})\mapsto (\begin{smallmatrix}\Leftarrow  & 1 & 1 & 0 & 1 \end{smallmatrix})$$ $$(\Rightarrow)^{ \Leftarrow} = \ \Rightarrow \ (\begin{smallmatrix}\Leftarrow & 1 & 1 & 0 & 1 \\ \Rightarrow & 1 & 0 & 1 & 1 \\ \end{smallmatrix})\mapsto (\begin{smallmatrix}\Rightarrow  & 1 & 0 & 1 & 1 \end{smallmatrix})$$ \end{example} \begin{theorem} We can say that two statements are equivalent if they produce a polynomial that can be reduced to the same normal form. Further, PBNF is a canonical form. \end{theorem}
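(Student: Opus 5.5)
The plan is to identify PBNF with the algebraic normal form (Zhegalkin form) of a Boolean function over $\mathbb{Z}_2$, and to prove uniqueness by a counting argument. I will fix one family, say $\mathcal{H}^{*}: g(p,q,1)$; the other families in the House differ only by the affine substitutions $p\mapsto p+1$, $q\mapsto q+1$ and by adding $1$, which are invertible. The general case then follows by transporting the argument.

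First, I define the normal form of a statement in $S_{c(k,l)}$ with letters $p_1,\dots,p_l$. It is the polynomial obtained by substituting the polynomials of the table for each connective, following the rule of Substitution as in the examples above. I then reduce using $x+x=0$ and $x^2=x$. After reduction, every statement is an element of the quotient ring $R_l=\mathbb{Z}_2[p_1,\dots,p_l]/(p_i^2+p_i)$. Each element of $R_l$ has a unique representative of the form $\sum_{I\subseteq\{1,\dots,l\}} a_I \prod_{i\in I}p_i$ with $a_I\in\{0,1\}$. This representative is the reduced normal form.

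Second, I show that the map ``statement $\mapsto$ polynomial'' respects truth values. For each of the 16 binary operators, the table entry $g(p,q,1)$, evaluated at $(p,q)\in\{1,0\}^2$, reproduces the number string $op_{\boldsymbol{x}(x_1,x_2,x_3,x_4)}$. This is a finite check, using the DBNF particles $pq$, $p(q+1)$, $(p+1)q$, $(p+1)(q+1)$, which are the indicator functions of areas $1$ to $4$ of the unit square. Since the DBNF phrases are disjoint, their disjunction maps to the sum of the particles (compare the footnote on $\lor\mapsto+$). By induction on the number $k$ of connectives, with substitution as the inductive step, the polynomial of any statement evaluates to the statement's truth value at every assignment. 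Hence two logically equivalent statements give polynomials that agree as functions $\{0,1\}^l\to\mathbb{Z}_2$. The converse also holds: equal normal forms give the same truth function, so the statements are equivalent.

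The main obstacle is the remaining step: polynomials that agree as functions must have identical reduced forms. This is exactly what ``canonical'' means. I would settle it by counting. The evaluation map $R_l\to\mathbb{Z}_2^{\{0,1\}^l}$ is a ring homomorphism between two sets of size $2^{2^l}$. It is surjective, because every function is a sum of the minterm indicators $\prod_{i}(p_i+\epsilon_i+1)$; this is the DBNF-to-PBNF transform itself. A surjection between finite sets of equal size is a bijection, so it is injective. Alternatively, one can argue by induction on $l$: write $F=p_lA+B$, where $A$ and $B$ do not contain $p_l$. If $F$ vanishes identically, setting $p_l=0$ gives $B\equiv0$ and setting $p_l=1$ gives $A\equiv0$, so $A=B=0$ by induction. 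Either way, equivalence of statements coincides with equality of reduced normal forms, which proves both sentences of the theorem. The same bijection shows that $p\lor\lnot p$ reduces to $1$, as in the introduction.
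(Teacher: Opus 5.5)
Your proof is correct, but it takes a different route from the paper's. The paper lists the four DBNF phrases and their particles $pq$, $p(q+1)$, $(p+1)q$, $(p+1)(q+1)$. It then asserts that the induced map $g$ from statement forms to polynomials is a bijection ``by inspection of the list of transformations'', that is, the 16-entry table for two letters. It infers injectivity from the existence of $g^{-1}$, and adds that every reduction in $\mathbb{Z}_2$ must end in a minimal polynomial.

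You share the existence half with the paper: your surjectivity step, writing every truth function as a sum of minterm indicators, is exactly the DBNF-to-PBNF transform. But you replace inspection with an actual uniqueness argument. The evaluation map from reduced multilinear polynomials to truth functions is a surjection between two sets of size $2^{2^l}$, or alternatively injectivity follows by induction on $l$ via $F=p_lA+B$. Your induction on the number of connectives also makes explicit that substitution preserves truth values, which the paper leaves to its worked examples.

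Your approach buys two things. First, it works for any number $l$ of letters, whereas inspecting the table only covers $l\le 2$. The Remark after the Proposition itself concedes that with three letters there is no longer a list of named operators to pull back to, yet canonicity is still claimed. Second, it closes the real gap in the paper's argument. Deducing injectivity of $g$ from $g^{-1}$ presupposes the bijection it is meant to establish. Likewise, ``every reduction ends in a minimal polynomial'' gives existence but not uniqueness of the reduced form. It does not rule out two equivalent statements, or two reduction orders, ending at different reduced polynomials, and that is precisely what your counting or induction step excludes. The paper's version is shorter and stays within its own vocabulary of particles and fibers, but as written it sketches the bijection rather than proving it.

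Two small points to tighten. In the induction, setting $p_l=1$ gives $A+B\equiv 0$, so concluding $A\equiv 0$ uses $B\equiv 0$. And rather than relying on $|R_l|=2^{2^l}$, which rests on your unproved unique-representative claim, run the count directly on the set of formal multilinear polynomials; that count then proves the unique-representative claim as well.
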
 \begin{proof} We note that the polynomials are algebraic expressions of DBNF as follows, and we get the PBNF to the right. $$x\land y  \mapsto pq$$ $$x\land \lnot y \mapsto p(q+1)$$ $$ \lnot x\land y \mapsto (p+1)q$$ $$\lnot x\land \lnot y \mapsto (p+1)(q+1).$$
Every form $S_{c(k,l)}$ is mapped to these particles by a bijection $g$ (which is clear by inspection of the list of transformations), so that $g^{-1}$ exists, and we have $g^{-1}((g(S_{c(k,l)}))=S_{c(k,l)}.$
We cannot have $g(S_{c(k,l)})=g(S^*_{c(k,l)})$ so that two forms is mapped on the same polynomial by DBNF, and $g^{-1}(g(S_{c(k,l)}))=g^{-1}((g(S^*_{c(k,l)}))$ give us $S_{c(k,l)}=S^*_{c(k,l)}$ which is a contradiction. It is also clear that every reduction in the polynomial algebra must end in a minimal polynomial because of the rules of addition and multiplication in $\mathbb{Z}_2$. This means that we, for example, cannot have $g(p \lor \lnot p) \not = g(1) $ so that PBNF is a canonical form. \end{proof} \begin{proposition} We claim that $p\land q \Rightarrow r \Leftrightarrow p \Rightarrow (q \Rightarrow r),$ but their converses are not equivalent $r \Rightarrow  p\land q \not\Leftrightarrow (q \Rightarrow r) \Rightarrow p$ \end{proposition}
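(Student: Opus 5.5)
We work throughout in the normal family $g(p,q,1)$, using from the table the maps $p\land q\mapsto pq$, $p\Rightarrow q\mapsto p(q+1)+1$ and $p\Leftrightarrow q\mapsto p+q+1$. We reduce everything with $x+x=0$ and $x^2=x$ in $\mathbb{Z}_2$. Since the earlier Theorem says that PBNF is a canonical form, two statements are equivalent exactly when their polynomials reduce to the same normal form. So both claims become polynomial identities or non-identities, checked by the rule of Substitution.

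For the first claim, substitute $pq$ for the antecedent of the conditional. The left side becomes
\[
p\land q\Rightarrow r\mapsto pq(r+1)+1=pqr+pq+1 .
\]
For the right side, first substitute $q(r+1)+1$ for the inner conditional, then place it in the consequent of the outer one:
\[
p\bigl((q(r+1)+1)+1\bigr)+1=pq(r+1)+1=pqr+pq+1 .
\]
This is the computation already done in Section 2 for $(A\land B\to C)\Leftrightarrow(A\to(B\to C))$, now with the letters $p,q,r$. To close, form LS$+$RS$+1$ for the biconditional. It collapses to $1$, so the statement is a tautology.

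For the second claim, the converses are $r\Rightarrow p\land q$ and $(q\Rightarrow r)\Rightarrow p$. The first maps to $r(pq+1)+1=pqr+r+1$. For the second, substitute $q(r+1)+1$ into the antecedent:
\[
(q(r+1)+1)(p+1)+1=pqr+pq+qr+q+p+1+1 ,
\]
which reduces to $pqr+pq+qr+p+q$. These two reduced polynomials are different. By canonicity they are therefore not equivalent, and the proof is complete. As a concrete check, forming LS$+$RS$+1$ gives a nonconstant polynomial; at $p=q=r=0$ the two sides take the values $1$ and $0$, so the biconditional is not a tautology.

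The main obstacle is bookkeeping in the second expansion: a dropped $+1$ easily turns the result into a spurious coincidence. I would guard against this by evaluating both polynomials at the assignment $(0,0,0)$ as an independent check.
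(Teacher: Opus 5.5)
Your proof is correct and follows the paper's own route. You work in $g(p,q,1)$ and substitute $pq$ and $q(r+1)+1$, which gives $pq(r+1)+1$ on both sides of the first claim, so LS$+$RS$+1=1$. For the converses you get $r(pq+1)+1$ versus $pqr+pq+qr+p+q$; the paper writes the latter as $(r+1)q(p+1)+p$. Your explicit evaluation at $p=q=r=0$ is a useful addition, since the paper stops at the two unequal polynomials without saying why they represent inequivalent statements.
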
 \begin{proof} $$p\land q \Rightarrow r \mapsto_{g(p,q,1)}  (p^S(q^S+1)+1=[p=pq;q=r]=pq(r+1)+1$$ $$p \Rightarrow (q \Rightarrow r)\mapsto p(q^S+1)+1=[q=(q(r+1)+1]=p(q(r+1)+1+1)+1$$ $$ =pq(r+1)+1.$$ We now use $$p \Rightarrow q \mapsto p^S + q^S+1=pq(r+1)+1 +pq(r+1)+1 +1=1. $$ We find for the converse  $$r \Rightarrow  p\land q \mapsto_{g(p,q,1)} p^S(q^S+1)+1=r(pq+1)+1$$ and for the right side $$(q \Rightarrow r) \Rightarrow p \mapsto  (p+1)q^S+1=(p+1)(q(r+1)+1)+1=(p+1)q(r+1)+(p+1)+1$$ $$=(pq+q)(r+1)+p=pqr+pq+qr+q+p=pq(r+1)+p+q(r+1)$$ $$=(r+1)(pq+q)+p=(r+1)q(p+1)+p.$$ \end{proof} \begin{remark} Here, we use three statements $(p, q, r)$, and we can put the combined statements above in PBNF, but we cannot determine the pull back ($\hookrightarrow$) that returns to the operator space, so we lose the bijection. With just $p$ and $q$, there are 16 polynomials all representing a binary operator. If we just add one more, we get 256 polynomials, each representing an “operator”. We can see this, as we now get 8 particles (atoms) from the letters ($pqr$).
$$x\land y \land z  \mapsto pqr \to 1.$$ 
$$x\land y \land \lnot z \mapsto pq(r+1) \to  2.$$
$$x\land  \lnot y \land z \mapsto p(q+1)r \to 3.$$
$$\lnot x\land y \land z \mapsto (p+1)qr \to 4.$$
$$x\land \lnot y \land \lnot z  \mapsto p(q+1)(r+1)\to 5$$
$$\lnot x\land y \land \lnot z \mapsto (p+1)q(r+1)\to 6$$
$$ \lnot  x\land \lnot y \land z \mapsto (p+1)(q+1)r \to 7$$
$$\lnot x\land \lnot y \land \lnot z \mapsto (p+1)(q+1)(r+1)\to 8$$
Still, the PBNF-transform will determine the truth value of the compound statement using the same method as a truth table.\end{remark} \begin{example} We can find some ternary operators, so for a "weak" bijection, for example in Church {12} again $[p,q,r]\in \mathcal{OP}_*$ there called \emph{conditioned disjunction} which dual to $[r,q,p]\in \mathcal{OP}_{**}$. We know that a generator to binary operators is $(p+1)(q+1)=pq+p+q+1$. To get the full ternary polynomial, we multiply by (r+1): $pqr+pq+pr+qr+p+q+r+1= (p+1)(q+1)(r+1)$. The numberstring in [3] for $[p,q,r]=11100010$ which means that we can construct the polynomial by numbering the areas in the unit square from 1 to 8 as above. This gives \begin{align}[p,q,r] \mapsto pqr+ pq(r+1)+ p(q+1)r +(p+1)(q+1)r=pq+qr+r; \\ N([p,q,r])=11100010 \end{align} The idea with this ternary operator is that it is self-dual and is complete with two constants $([p,q,r],t,f)$. We shall later comment on this fact. \end{example} \begin{example} Other ternary connectives for the pull back are the minority and majority operator which are complement of each other, often written $(Mpqr)$ which gives 1 if at least two are 1. It has $$(p \land q) \lor (p \land r) \dot\lor (q \land r)\mapsto (p^S+1)(q^S+1)+1$$$$=((pq+1)(pr+1)+1)+1)(qr+1)+1$$ $$=(pqr+pq+pr+1)(qr+1)+1=pq+pr+qr.$$ We get the equation $$pq+pr+qr=1 \Leftrightarrow \ 111,110,101,011.$$ so here we could use the PBNF-transformations, and the minority ternary is just to shift reading to $T=0$.\end{example} \begin{example} Still another pullback we find in $+pqr$ the \emph{ternary parity} connective; this will return 1 when the number of inputs of 1 is odd. For a four-place string the inputs are 10 and 01, the operator XOR or | here, but elsewhere often $+$. The adequate inputs are 111,100,010,001 and that give the areas 1,5,6,7 so we get $$pqr+p(q+1)(r+1)+(p+1)q(r+1)+(p+1)(q+1)r=p+q+r. $$ We thus observe that we have the same type of mapping here as for the binary |, $(p+q)$, and it would be a god guess to pick $p+q+r$, because of the Boolean property, $x+x=0$. \end{example} We continue with more examples with compound statements of the form $S_{c(k,3)}.$ \begin{example} We prove that $$(p\land q)\land r =p\land q \land(q\land r).$$ This is mapped to $(pq)r=p(qr)$, and we can use the associativity of real numbers. \end{example} \begin{example} We also prove transitivity under the conditional $$(p\Rightarrow q) \land (q\Rightarrow r)\dot\Rightarrow  (p \Rightarrow r)$$ This is mapped to $p^S(q^S+1)+1$ and $p^S=(p(q+1)+1)(q(r+1)+1)=p(q+1)+q(r+1)+1$ $$q^S=[p(r+1)+1)]$$ $$p^S(q^S+1)+1=p^S([p(r+1)+1)]+1)+1$$ $$=p^Sp(r+1)+1$$ $$[p(q+1)+q(r+1)+1]p(r+1)+1$$ $$=p(q+1)(r+1)+pq(r+1)+p(r+1)+1$$ $$=p(q+1)(r+1)+pq(r+1)+p(r+1)+1$$ $$=pqr+pq+pr+p+pqr+pq+pr+p+1=1$$ \end{example} \section{The Extended Calculus of Statements with Linear and Quadratic Polynomials} We shall here consider a 2-valued function: the value of a singular statement or a compound statement variable is in $(y_1,y_2,\ldots,y_n)$, and the value of an operator is in $(x_1,x_2,\ldots,x_n) $. \begin{definition} $f: \mathcal{D}^{n} \longrightarrow (\mathbb{Z}_n, \mathbb{Z}_n$) is a function of $n$ arguments (operators) in the domain $\mathcal{D}$, null-class operators (0-operators) that can take one of the values ${y_0,y_1,\ldots, y_n} \in \mathbb{Z}n $ or for (1,2)-operators take a number string, a \emph{vector}, of the form $(x_1,x_2,\ldots,x_n)$, often just $(x_1x_2\ldots x_n)$ or even $x_1x_2\ldots x_n$ with $x_n\in \mathbb{Z}n$ which we call the range. \end{definition} We gave the definition here for $n\in \mathbb{N}$ but usually we just consider two truth values $y_0=0$ and $1=y_1$ so that ${y_0,y_1} \in \mathbb{Z}_2 $ and also $x \in \mathbb{Z}_2$ and $|x_n|=1,2,3,4.$ However far riching generalizations exist [2] even into the continium. \begin{example} If we consider the disjunction we get $$f(S_{c(1,0)})(1,0)=f((\lor)(1,0))=(x_1,x_2,x_3,x_4)(1,0)=1110$$ so the argument $\lor$ is sent to a four-placed string of $x$, and then evaluated for here $(1,0)$, but ifjust show the number string we use as befor $N(\lor)=(x_1,x_2,x_3,x_4)(0,1)=(0001)$. We also evaluate statements, for example $$f(p \land (p\Rightarrow q))\mapsto 1/0.$$ Here the arguments are $(p,q, \land,\Rightarrow)\in D.$ Usually, the outcome is decided by logical rules, axioms, or truth tables, but here, instead, we make a transformation to spaces of polynomials $g(p,q,a)$ $$g(f(S_{c(k,l)})) \mapsto (g(p,q,a)\mapsto 1,0 \quad \text{or} \quad g(p,q,1)$$ In this case, we have $$g(f(p \land (p\Rightarrow q)))\mapsto pq^S=p(p(q+1)+1)=pq \mapsto p \land q.$$ Thus we get $f(p \land (p\Rightarrow q))=p \land q $ It is also possible to just count $$f(p \land (p\Rightarrow q))=(1100)(1010)^S=(1100)(1011)=1000$$ here using componentwise multiplication after substitution for the conditional(1011), and this is the same as using polynomials. Later, we will see more of this at work. We shall also write $g(S_{c(k,l)})$ instead of $g(f(S_{c(k,l)})).$ \end{example} Next, we define the notions we will be using in our statement calculus, the geometric view with partitioning the unit. \begin{definition} The unity is divided into $k=2^j$ areas when $j$ is the number of variables. With one variable$(j=1)$, say $x$, there is a partition of the unity in two areas $(k=2)$: $x$ and (not $x$) $\neg x$. The power set $\mathcal {P}(X_1)$ contains $2^k$ members so $\mathcal {P}= {0, x, \neg x, 1}$ and $|\mathcal {P}|=4.$ We have $x,\lnot x$ that partion the unity $1=(x\land \lnot x)$. 
With two variables $j=1,2$ ($x$ and $y$), we get $2^2=4$ areas that partition the unity: $ x \land y, x \land \neg y, \neg x \land y$ and $ \neg x \land \neg y.$ The power set is of size $|\mathcal(P)|=16.$ \end{definition} \begin{definition} There are $2^{2^j}$ members of the type $op_{\boldsymbol{x}}$ in the operator space $\mathcal{OP}$ that represent each an operator connected to a statement letter or between statement letters. Here we consider $\emph{singular}$ operators $(j=1)$, total 4 and $\emph{binary}$ operators $(j=2)$ in a total of 16. \end{definition} Now we want to establish a connection between a $\mathcal{OP}$-space and a polynomial family by the PBNF-transform.\footnote{This type of thinking is used extensively in mathematics, for example, in geometry, a circle with center in the origin and radius 1 is mapped into the algebraic equation $x^2+y^2=1$. Differential operators are also mapped into algebraic forms, when $D_xD_y$ becomes the simple polynomial $\xi\eta$. The idea is that we transfer to a space that is easier to handle, in this case, Boolean polynomial algebra.} \begin{definition} The polynomial House, $\mathcal{H},$ contains polynomial forms, $\emph{symbols}$, where each symbol is connected to a member in the operator space $\mathcal{OP}$ by its range $(x_1,x_2,\ldots,x_n).$ \end{definition} \begin{definition} The polynomial families will all be types of Boolean algebra which is \begin{equation}\mathcal{B} = <B,\land,\lor,',0,1 > ; \land,\lor,' \in op_{\mathcal{B}} \end{equation} where $\land$ and $\lor$ are binary operators in $\mathcal{B}$, $'$ is a singular operator, $0,1$ are designated distinct elements, so $B \in S_c$ is a non-void set upon which these operators act. The binary operators commute, and the distributive laws hold, but not the associative law, so for example $(x \land y) \lor x \not = x \land (y \lor x) $ in general unless $x=y.$  \end{definition} \begin{definition} The symbols we consider here with $j=1,2$ are quadratic polynomials of $p$ and $q$ that form a Boolean ring, using addition and multiplication, modulo 2, where these polynomials are a finite sum of the form $a_1pq+a_2p+a_3q+a_4$ and $a_n=0,1$. $p$ and $q$ in the polynomial have both a geometric meaning as areas dividing the unity, defining a union and a non-void intersection, as well as areas different from $p$ and $q$, which also have a number theoretic meaning representing a number string $p=1100$ and $q=1010.$ We also consider the simpler system with just linear polynomials in $p$ or $q$ for the singular operators, forming a Boolean group with only addition defined. These polynomials are also a finite sum of the form $a_1pq+a_2p+a_3q+a_4$ and $a_n=0,1$ but here $a_1=a_{2|3}=0.$ \end{definition} In the next definition, we consider an extension of the Boolean ring by adding the matrix operation to addition and multiplication. We shall later use this concept for a slightly different view. This is not the case of matrices with elements in a Boolean ring, but instead matrices in a Boolean ring and also elements in this ring. \begin{definition} The Boolean ring can be extended by adding matrix multiplication and addition. We then consider $\left[{x_1 x_2} \atop {x_3 x_4}  \right]$, which uses addition and multiplication, modulo 2. We also use the transpose which can be seen as changing the number string, for example, $p^T(1100)=q(1010)$ or $\left[{x_1 x_3} \atop {x_2 x_4} \right]$.\end{definition} \begin{proposition} For a $2\times 2$-matrix to be Boolean, with Boolean entries, we must have $$\left[{x_1 x_2} \atop {x_3 x_4}  \right] +\left[{x_1 x_2} \atop {x_3 x_4}  \right]=0$$ and $$\left[{x_1 x_2} \atop {x_3 x_4}  \right] \cdot \left[{x_1 x_2} \atop {x_3 x_4} \right]=\left[{x_1 x_2} \atop {x_3 x_4}  \right]$$ and we find that the following matrix-operators $p, p', q, q', I, \land, \uparrow, \iota_0 $ are all Boolean, and they are all closed under addition and multiplication from left $\lor$ multiplication from the the right. \end{proposition}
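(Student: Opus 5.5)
The plan is to make the statement concrete by fixing the matrix convention of the introduction, $N(op_{\boldsymbol{x}})=x_1x_2x_3x_4\mapsto\left[{x_1 x_2}\atop{x_3 x_4}\right]$, and then to check the two defining identities entry by entry over $\mathbb{Z}_2$. The first identity, $A+A=0$, needs no work: each entry becomes $2x_i\equiv 0 \pmod 2$, so it holds for every $2\times 2$ matrix over $\mathbb{Z}_2$. All of the content therefore lies in idempotency, $A^2=A$. Writing out the product, this becomes the four scalar conditions
$$x_1^2+x_2x_3=x_1,\quad x_2(x_1+x_4)=x_2,\quad x_3(x_1+x_4)=x_3,\quad x_3x_2+x_4^2=x_4 .$$
I will reduce these using $x_i^2=x_i$.

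Next I list the eight operators with the number strings used earlier in the paper:
\begin{itemize}
\item $p=1100$ and $p'=0011$;
\item $q=1010$ and $q'=0101$;
\item $I=N(\Leftrightarrow)=1001$;
\item $\land=1000$;
\item $\uparrow$, taken as $0001$, the nondisjunction $(p+1)(q+1)$ of the footnote in Section 2;
\item $\iota_0=0000$.
\end{itemize}
Each is substituted into the four conditions. For the row and column matrices $p,p',q,q'$, exactly one of $x_2,x_3$ is nonzero, so $x_2x_3=0$ and the trace $x_1+x_4$ equals $1$; the conditions then hold at once. The diagonal matrices $I$, $\land$, $\uparrow$ and $\iota_0$ have $x_2=x_3=0$, and the conditions reduce to $x_i^2=x_i$. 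A useful cross-check is the characterisation that a nontrivial idempotent $2\times2$ matrix over $\mathbb{Z}_2$ has trace $1$ and determinant $0$. This matches the introduction's remark that $\det=0$ in the non-trivial cases, with $I$ the exception of determinant $1$.

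For the closure claim I will form the $8\times 8$ multiplication table. Most entries follow from the factorisations $p=e_1(1,1)$, $q=(1,1)^Te_1^T$ and so on, together with a few computations such as $pq=0$ (since $1+1=0$), $qp=\left[{11}\atop{11}\right]$, and $\land\, p=p$. The aim is to check which products land back in the list, and in particular which pairs $A,B$ satisfy $AB+BA=0$. That condition is exactly what makes $A+B$ idempotent, because $(A+B)^2=A+B+AB+BA$. This identity is the tool I will use for the additive part of the claim.

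I expect the closure statement to be the main obstacle, because read literally it is false. For example, $p+q=0110$ gives $\left[{01}\atop{10}\right]^2=I\neq\left[{01}\atop{10}\right]$, and $qp=\left[{11}\atop{11}\right]$ is not in the list. My first approach is therefore to prove the strongest true version, stating precisely:
\begin{itemize}
\item the list is closed under left multiplication by $\land$, $\uparrow$, $I$ and $\iota_0$;
\item the diagonal subfamily $\{\iota_0,\land,\uparrow,I\}$ is closed under both addition and multiplication, forming the Boolean ring $\mathbb{Z}_2\times\mathbb{Z}_2$;
\item sums $A+B$ remain Boolean exactly when $AB=BA$ with $AB$ itself in the family, which covers $p+p'$, $q+q'$ and $\land+\uparrow=I$.
\end{itemize}
I will read the paper's phrase ``from left $\lor$ from the right'' in this restricted sense.
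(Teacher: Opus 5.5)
Your verification of the two defining identities is correct. It is the same entry-by-entry ``testing'' the paper does. Your convention $\uparrow=0001$ is in fact forced, since the Section 4 string $N(\uparrow)=0111$ gives $\left[{0\,1}\atop{1\,1}\right]^2=\left[{1\,1}\atop{1\,0}\right]$.

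You are also right that the closure clause is false as written. The paper's proof merely asserts it, even though the paper itself notes that $p+q$ (XOR) and $p+p'=\left[{1\,1}\atop{1\,1}\right]$ are not Boolean.

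The gap is that the ``strongest true version'' you propose to prove is itself false in two places.
\begin{itemize}
\item The list is not closed under left multiplication by $\land$ or $\uparrow$. Both $\land\,q'=\left[{0\,1}\atop{0\,0}\right]$ (the string $0100$ of $\not\Rightarrow$) and $\uparrow q=\left[{0\,0}\atop{1\,0}\right]$ (the string $0010$ of $\not\Leftarrow$) are nilpotent and not in the list. Right multiplication fails the same way: $p'\land=\left[{0\,0}\atop{1\,0}\right]$ and $p\uparrow=\left[{0\,1}\atop{0\,0}\right]$. Among the eight, only $I$ and $\iota_0$ map the list into itself by left multiplication.
\item The sums $p+p'$ and $q+q'$ are not Boolean. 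We have $pp'=p$ but $p'p=p'$, and $qq'=q'$ but $q'q=q$. So by your own identity $(p+p')^2=p+p'+p+p'=0$, and indeed $p+p'=q+q'=\left[{1\,1}\atop{1\,1}\right]$.
\end{itemize}
Your criterion should simply read: $X+Y$ is idempotent iff $XY=YX$, since in characteristic $2$ the condition $XY+YX=0$ is commutation. The extra condition ``$XY$ in the family'' is automatic, because a product of commuting idempotents is idempotent. You asserted these two examples without computing the commutators.

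What the multiplication table actually yields is the following.
\begin{itemize}
\item By your trace--determinant criterion, the eight matrices are exactly all idempotents of $M_2(\mathbb{Z}_2)$: $0$, $I$, and the six rank-one matrices of trace $1$. This gives closure under transposition at once.
\item Pairs involving $0$ or $I$ cause no trouble; note that $I+X$ stays in the list, e.g.\ $I+p=q'$, $I+p'=q$, $I+\land=\uparrow$. Apart from these, the only commuting pairs are $\{\land,\uparrow\}$, $\{p,q'\}$ and $\{p',q\}$, each summing to $I$.
\item Hence the list is the union of three Boolean subrings $\{0,I,\land,\uparrow\}$, $\{0,I,p,q'\}$ and $\{0,I,p',q\}$, each isomorphic to $\mathbb{Z}_2\times\mathbb{Z}_2$. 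Every other sum leaves the list.
\item On the multiplicative side, for every pair $X,Y$ at least one of $XY$, $YX$ lies in the list, e.g.\ $pq=0$, $q'\land=0$, $q\uparrow=0$, $\uparrow p=0$. This is a plausible reading of the paper's ``from left $\lor$ right'' and ``one-sided multiplication''.
\end{itemize}
These are the statements you can actually prove. The additive closure cannot be rescued beyond the three subrings.
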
 \begin{proof} After testing, we find that the selectors $p, p', q, q', I, \land, \uparrow, \iota_0$ fulfill these criteria as matrices and can be added to the Boolean ring. The XOR(|) does not pass the test, giving $(|)^2=I$ as this is a mirror map of I. The first trivial operator $ \left[{0  0} \atop {0  0} \right]$ is Boolean, but not the second  $\left[ {1  1} \atop {1  1} \right]$. Of the normal form operators, $\land$ is Boolean as a matrix, but not $\lor$. Of the derived operators, $\uparrow$ is Boolean $\downarrow$ is not, and none of the others are either. All matrix-operator elements are also closed under transposition, addition, and multiplication from left $\lor$ right. This is not a restriction, as we often just need one sided multiplication. \end{proof} We already here show an example of this where we use matrices as generators. \begin{theorem} The matrix-definition makes it possible to start with a Selector as a mono-connective primitive for a formulation of Propositional Calculus. \end{theorem}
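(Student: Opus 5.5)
The plan is to show that, once the operations of the Boolean ring are extended to $2\times 2$ matrices as in the last definition, all 16 binary operators can be generated from a single Selector, say $p=\left[{1\,1}\atop{0\,0}\right]$. The generation will use only matrix addition, multiplication and transposition modulo 2. I read ``mono-connective primitive'' this way: every number string $N(op_{\boldsymbol{x}})=x_1x_2x_3x_4$, written as $\left[{x_1 x_2}\atop{x_3 x_4}\right]$, lies in the closure of $\{p\}$ under these operations. By the bijection of the earlier theorem (PBNF is canonical, every operator corresponds to exactly one polynomial $a_1pq+a_2p+a_3q+a_4$), it then follows that every statement form in $S_{c(k,2)}$ can be expressed.

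The construction goes in four steps.

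\begin{enumerate}
\item Transposition gives $p^T=\left[{1\,0}\atop{1\,0}\right]=q$. This is the example already used in the definition of the matrix extension, so from one Selector we obtain both Selectors.
\item Products of these give the one-point strings. We have $p\,q=\left[{1\,1}\atop{0\,0}\right]\left[{1\,0}\atop{1\,0}\right]=\left[{0\,0}\atop{0\,0}\right]$ mod 2, which yields $\iota_0$. We also have $q\,p=\left[{1\,1}\atop{1\,1}\right]$, which yields $\iota_1$, so both trivial operators appear.
\item Using $\iota_1$ as the constant $1$, addition gives the complements $p'=p+\iota_1$ and $q'=q+\iota_1$. Products such as $p\,p'$ and $p'\,q$ should produce the particles $\left[{1\,0}\atop{0\,0}\right]=\land$, $\left[{0\,1}\atop{0\,0}\right]=\not\Rightarrow$, $\left[{0\,0}\atop{1\,0}\right]=\not\Leftarrow$ and $\left[{0\,0}\atop{0\,1}\right]=\downarrow$. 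The exact products will be found by a short search, together with transposes where needed.
\item These four particles are the matrix counterparts of the DBNF phrases $pq,\ p(q+1),\ (p+1)q,\ (p+1)(q+1)$. The matrices with a single 1 are disjoint, so the remark that $\lor\mapsto+$ for disjoint sets applies. Hence sums of particles give every number string, which recovers all 16 operators, including $\Leftrightarrow=I$ and $|$.
\end{enumerate}

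The main obstacle is step 3. Ordinary matrix multiplication is not componentwise, so it is not clear in advance that products of $p,q,p',q'$ reach all four single-entry matrices. The proposition on Boolean matrices warns that the set $\{p,p',q,q',I,\land,\uparrow,\iota_0\}$ is closed under multiplication. My first attempt is therefore to compute the full multiplication table of $p,q,p',q'$ and their transposes, and to look for the elementary matrices directly. If they do not all appear, the fallback is to use $\iota_1$ together with addition to generate $I=\left[{1\,0}\atop{0\,1}\right]$ (for instance as $p+q'$ or a similar sum). Products of $I$-type and Selector matrices should then isolate single entries. Failing that, I would accept $\land$ from the closure list and obtain the remaining particles as $p+\land$, $q+\land$ and $\iota_1+p+q+\land$, mirroring the PBNF identities $p(q+1)=pq+p$ and $(p+1)(q+1)=pq+p+q+1$. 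These additive expressions would then be the real content of the theorem: in the polynomial House, $p$, $q$ and $1$ act as generators.
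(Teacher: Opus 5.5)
Your Steps 1 and 2 are correct. In fact $qp=p^{T}p=\left[\begin{smallmatrix}1&1\\1&1\end{smallmatrix}\right]$ is a neater source of the constant $1$ than the paper's, which simply lists $p'$. Step 3, however, is not a matter of searching: it cannot succeed.

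Call $M=\left[\begin{smallmatrix}a&b\\c&d\end{smallmatrix}\right]$ over $\mathbb{Z}_2$ even if $a+b+c+d=0$. Equivalently, $M\mathbf{1}=\lambda\mathbf{1}$ for $\mathbf{1}=(1,1)^{T}$ and some $\lambda\in\mathbb{Z}_2$. Even matrices are closed under the operations you allow:
\begin{itemize}
\item sums;
\item matrix products, since if $M\mathbf{1}=\lambda\mathbf{1}$ and $N\mathbf{1}=\mu\mathbf{1}$ then $MN\mathbf{1}=\lambda\mu\mathbf{1}$;
\item transposition, which preserves the entries.
\end{itemize}
Since $p$ is even, its closure under these operations is exactly the eight even matrices $0,1,p,q,p',q',\ p+q=|,\ p+q+1=I$. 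You already have all of these after Steps 1--3, or reach them by one addition.

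None of the particles $\land,\not\Rightarrow,\not\Leftarrow,\downarrow$ (weight one) is ever reached, nor are $\lor,\Rightarrow,\Leftarrow,\uparrow$ (weight three). Your multiplication table of $p,q,p',q'$ returns only elements of $\{0,1,p,q,p',q'\}$. Multiplying by $I$ or by $|$ only leaves a matrix unchanged or swaps its rows or columns, so your first fallback fails as well. Your second fallback, taking $\land$ from the proposition's closure list, adds a second primitive, which is exactly what a mono-connective claim forbids. With the literal matrix product, only 8 of the 16 binary operators are generated.

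The missing idea is that the product creating the particles is not the matrix product. It is the componentwise product of number strings, which is the product of polynomials in $\mathcal{H}$: $pq\mapsto(1100)(1010)=(1000)$. Note that $p\times q=0$ as matrices while $pq=\land$ in $\mathcal{H}$, so two different products are in play.

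This is how the paper's proof runs. It uses the matrix structure only to get $q=p^{T}$, $p'$, $0$ as the matrix product $p\times q$, and $1=p+p'$. It then multiplies polynomially in $g(p,q,1)$ to obtain $pq$ and the generator $pq+p+q+1=(p+1)(q+1)$. Every operator is then built from $p,q,0,1$ as in the table theorem.

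If you make this switch explicit, your Step 3 becomes immediate. Entrywise, $p\odot q=\land$, $p\odot q'=\not\Rightarrow$, $p'\odot q=\not\Leftarrow$ and $p'\odot q'=\downarrow$, and your Step 4 then finishes the argument. Your closing sentence points in this direction. It needs to be the main argument, with the change of multiplication stated, rather than a last resort.
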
 \begin{proof} We start with four matrix operations and here we use $p$ as Selector. 
\begin{equation}\left\{  
\begin{array}{ll}
\boldsymbol{x}_{p}\mapsto (\begin{smallmatrix}1 & 1\\0 & 0\\ \end{smallmatrix})\\
\boldsymbol{x}_{q=p^T}\mapsto (\begin{smallmatrix}1 & 0\\1 & 0\\ \end{smallmatrix})\\
\boldsymbol{x}_{p'}\mapsto (\begin{smallmatrix}0 & 0\\1 & 1\\ \end{smallmatrix})\\
\boldsymbol{x}_{p\times q}\mapsto (\begin{smallmatrix}0 & 0\\0 & 0\\ \end{smallmatrix})
\end{array}\right.
\end{equation} Now we have the usual Selectors $p, q$ and $p'$. We may add, to get 1 (which is not a Boolean matrix)  $p+p'=p+p+1 =1$ witch are defined because both $p$, $p'$ and 0 exist as Boolean matrices and by that they can also be mapped to the polynomials. Recall that we do not demand to create the trivial operators by our primitives, it is enough that these can be defined later, by Church, page 129, § 24 in [3].
After this start with $p$, we have the full polynomial, by adding and multiplying our defined components, to get $pq+p+q+1 \in \mathcal{H}(g(p,q,1)),$ and can construct any operator by the same method as we did before.  It was enough with $p, q, 0$, and $1$, which shows the importance of the Selectors and the Trivial operators.\end{proof}
\section {Linearity and proof of the first Polynomial Normal Form Theorem} A boolean polynomial is linear for $p,q \in S_{c(k,l)}$ (the statement form, $c$=classical, $k$=counting the operators(conectives), $l$=total of litterals) the following iterated sence. \begin{definition} We use $\star$ as indicating some space or some operator \begin{align} g(S_{c(k,l)}\star S_{c(m,n)})=g(g(S_{c(k,l)}) \star g(S_{c(m,n)}))=g(p,q,a)g(\star)g(p,q,a)\\ =(a_1pq+a_2p+a_3q+a)(0,1)=(0|1) \big{|}g(p,q,a); \ a \land a_{j=1,2,3}=0/1. \end{align} \end{definition} \begin{example}Let the conjunctive compound statement $S_{c(2,3)}$ with two conditional sub statements  $S_{c(1,2)}^1$ and  $S_{c(1,2)}^2$  be $(p\Rightarrow q)\land (q\Rightarrow p).$ Then we have for  $$g_{(p,q,1)}((p\Rightarrow q)\land (q\Rightarrow p))= g(g(p\Rightarrow q) \land g (q\Rightarrow p))$$ $$(p(q+1)+1) g(\land) (p+1)q+1) = (p(q+1)+1)\cdot((p+1)q+1)$$ $$=(p(q+1)+1)+((p+1)q+1)=p+q+1(1,0)=1(00,11)|0(10,01).$$ Here we evaluated for $1 \land 0$, and, as we are in $g(p,q,1)$, this gives the number string (1001). The inverse function $g^{-1}(p,q,1)$ is applied: $$g^{-1}_{(p,q,1)}(p+q+1)= p\Leftrightarrow q.$$ We also see that the range $\bar{\mathcal{H}}$ is real-valued, we think of the polynomial as a number, not in the sense that is often used in algebra where there is no evaluation, is called indeterminate, but that view does not apply here. \end{example}\begin{remark}This iteration process could continue until all sub-statements are finished and the main connective is transformed. We refer to this process as not having any rules of inference in PBNF. Usually, we apply the substitution, which is traditionally regarded as a rule of inference. The iterative process perhaps clarifies better how our formulation of PC works. We also argue that this is not a logical rule of inference, but linearity used in mathematical areas, as for example in $f(a+b)=f(a)+f(b)$.\end{remark} The mapping $f$ in the singular case $(j=1)$ is defined by \begin{equation} f(x_1,x_2)= (0, p, p+1,1) \end{equation} in a Boolean group $\mathcal{B}_1= \langle 0, p, p+1,1;+ \rangle.$ In the binary case, we have a Boolean polynomial matrix-ring \begin{align} \mathcal{B}_2=\langle 0,\left[{0 0} \atop {0 0} \right]p, \left[{1 1} \atop {0 0}  \right]q, \left[{1 0} \atop {1 0}  \right]p+1,\left[{0 0} \atop {1 1}  \right] q+1,\left[{0 1} \atop {0 1}  \right] p+q, p+q+1,\left[{1 0} \atop {0 1}  \right]\\  (p+1)q,(p+1)q+1, p(q+1), p(q+1)+1,\\  (p+1)(q+1)\left[{ 0} \atop {0 1}  \right],(p+1)(q+1)+1, pq,\left[{1 0} \atop {0 0}  \right] pq+1, 1;+, \cdot,\rangle. \end{align} The mapping $g^{-1}:\mathcal{H} \longrightarrow \mathcal{OP} $  is the $\emph{fiber}$ or pull back, defined by the inverse image of $g(\boldsymbol {x}).$ Observe, that we just demand a one-sided multiplication for the matrix-multplication for closedness. \begin{remark} Here we must be carefull as the singular operators, $(=,-,\lnot,+$, and we will explore them in next article) could also be defined $-:= p \land \lnot p$ and $+:=  p \lor \neg p$ and then we have no bijection as $$g^{-1}(p(p+1))\mapsto -(00)\quad \text{and} \quad \iota_0 (0000)$$ and the same for $\iota_1$. We shall avoid this by not using these binary polynomials for the singular operators. \end{remark} We let $N$ denote the number string (without commas) of an operator, so, for example, $N(p\Rightarrow q)\sim N(\Rightarrow) = (1011)$. The connection is $g(1011)=p(q+1)+1$ and the fiber $g^{-1} (p(q+1)+1)=op_{\boldsymbol{x}(1011)}.$ 
\begin{theorem} \begin{tabular}{l l l}
$N(p .op_{\boldsymbol{x}} q)$ & String in $f(op_{\boldsymbol{x}})$ & Symbol $\in \mathcal{H}^*  $\\
\hline
$(1) N(p\wedge q)=1000$ &$(1100)(1010)=(1000)$& $pq$ \\
$(2_p)N(p,\neg p) $&$(1100),(0011)$ &$p,p+1$\\
$(2_q)N(q,\neg q) $&$(1010),(0101)$ &$q,q+1$\\
$(3) N(p\vee q)=1110$&$1100+1010+1000=1110$ &$(p+1)(q+1)+1$ \\
$(4) N(p\Rightarrow q)=1011$ &$1000+1100+1111=1011$&$p(q+1)+1$\\
$(5) N(p\Leftarrow q)=1101$ &$1000+1010+1111=1101$&$(p+1)q+1$\\
$(6) N(p\Leftrightarrow q) =1001$ &$(1011)(1101)=1001$ &$p+q+1$ \\
$(7) N(p\uparrow q)= 0111$ &$(1100)(1010)+1111=0111$ &$pq+1$. \\
$(8) N(p\downarrow q)=0001$ & $1110+1111=0001$ &$(p+1)(q+1)$. \\
$(9) N(p|q)=0110$ & $1100+1010=0110$ &$p+q$. \\
\end{tabular} \end{theorem}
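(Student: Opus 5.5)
The plan is to treat the table as a statement about the ring isomorphism between number strings and polynomials. Identify each polynomial $a_1pq+a_2p+a_3q+a_4\in\mathcal{H}^*=g(p,q,1)$ with the four-place string obtained by evaluating it at the inputs $(p,q)=(1,1),(1,0),(0,1),(0,0)$, in the order of the areas $1,2,3,4$ of the unit square. Under this evaluation map $p\mapsto 1100$, $q\mapsto 1010$ and $1\mapsto 1111$. Addition modulo 2 becomes componentwise XOR and multiplication becomes componentwise AND. So the map is a homomorphism of Boolean rings from the polynomial ring (with $p^2=p$, $q^2=q$) to $\mathbb{Z}_2^4$. The first step is to verify this, which is immediate since evaluation respects $+$ and $\cdot$ pointwise.

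The second step is to check that this homomorphism is a bijection, so that each string in the middle column determines exactly one symbol in the right column. Both sides have $16$ elements: the coefficients $a_1,\dots,a_4\in\{0,1\}$ give $16$ polynomials, and there are $16$ strings. For injectivity it suffices to show that only the zero polynomial evaluates to $0000$. Evaluating at $(0,0)$ gives $a_4=0$; at $(1,0)$ and $(0,1)$ this forces $a_2=a_3=0$; at $(1,1)$ it then forces $a_1=0$. This is the triangular structure behind the particles $pq,\ p(q+1),\ (p+1)q,\ (p+1)(q+1)$, which map to $1000,0100,0010,0001$. It agrees with the canonicity in Theorem 3.2 and with the DBNF correspondence used there.

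The third step is to go through the rows (1)--(9). Each middle-column identity is a componentwise calculation, for instance $(1100)(1010)=1000$ and $1100+1010+1000=1110$. Applying the homomorphism, the same identity holds for the corresponding polynomials, which yields the right column.
\begin{itemize}
\item Row (1) gives $pq$, and row (3) gives $p+q+pq=(p+1)(q+1)+1$.
\item Row (4) gives $pq+p+1=p(q+1)+1$, and row (5) gives $(p+1)q+1$.
\item Row (6) gives $(p(q+1)+1)((p+1)q+1)$. Since $p(q+1)\cdot(p+1)q=0$, this reduces to $p(q+1)+(p+1)q+1=p+q+1$.
\item Rows (7), (8) and (9) follow by adding $1$ or adding the selectors.
\item Rows $(2_p)$ and $(2_q)$ are just the images of $p$, $q$ and $1$.
\end{itemize}
Finally, I would confirm that each resulting string agrees with the truth table $N(op_{\boldsymbol{x}})$ of the named connective under the selectors $p_1,q_1$.

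The only real obstacle is notational rather than mathematical. The string convention must match the evaluation order of the areas, and the reading $1=$ True must be fixed (the family $\mathcal{H}^*$ rather than its complement). The multiplicative identities in rows (1) and (6) rely on reading multiplication of strings as componentwise AND, so I would state that convention explicitly at the start.
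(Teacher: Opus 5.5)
Your proposal is correct and takes essentially the same route as the paper. The paper identifies $p$ and $q$ with the strings $1100$ and $1010$, reads polynomial multiplication and addition modulo $2$ as componentwise product and XOR, and checks the rows one by one from the building blocks $0,p,q,p+1,q+1,1$; your explicit evaluation homomorphism and your triangular injectivity argument make that verification rigorous, and they add a uniqueness statement that the paper gets only by inspection.
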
 
\begin{proof}To find these polynomials, we start with $pq$, which we name the \emph{identity polynomial} and we use it as the \emph{primitive} polynomial and we assume that this maps to $(1,0,0,0)$, we build on that operator, and test the result. We multiply component wise(inner product) $$\langle p\cdot q\rangle=\langle 1100 \cdot 1010\rangle=(1,0,0,0)\mapsto \land.$$  Observe that this is not a scalar product; the result is a vector. Now we try to get the disjunction $ p \lor q$ by adding $p+q=(1,1,0,0)+(1,0,1,0)=(0,1,1,0) \mapsto | $, which gives the exclusive disjunction, since $1+1=0$ so the intersection is missing. This means that the intersection $$\cap =pq\mapsto \land $$ is multiplication, but the union $$\cup \not = p+q \mapsto |, \quad \cup = p+q+pq \mapsto \lor $$ is not just addition of $p$ and $q$, but by adding $(| + \land)\mapsto p+q+pq=(1,1,1,0)$ using the primitive we started with. We can also see this as the definition of the exclusive disjunction |. \begin{equation}p+q=(p\cap q')\cup (p'\cap q). \end{equation} We often write $pq+p+q$  as $(p+1)(q+1)+1 \hookrightarrow \lor $ because $\lor$ is the complement of the homogenous input $ \left[ {0 \atop 0} \right] $ in $ \left[ {1100 \atop 1010} \right]$ as the complement is done by adding $1, ((0001)+ (1111)=(1110)).$ We like to use  these \emph{particles}  $(p+1)$ and $(q+1)$ in the calculations, because both are $0$ if they are multiplied by the same Selector as in the parentheses ($p(p+1)=0)$.
In this way, we proved the statement, as all these transformations could be tested one by one. We notice the use of the building blocks $(0, p, q, p+1, q+1,1)$ that use only + and $\cdot$ in $\mathbb{Z}_2$ to fill the table. By adding 1, you negate the operator $ +1 \leftrightarrow \neg op_x$, by adding $p, q$ and $1$ you add component wise $p+q+1=(1,1,0,0) +(1,0,1,0)+(1,1,1,1)=(0,1,1,0)+(1,1,1,1)=(1,0,0,1)$ with $1+1=0$ which gives $ \Leftrightarrow, $ or you can use multiplication as above. 
Here we notice that all forms are in their complement as we have added 1 to all but the first. This points to the possibility of a dual space that is not in the complement. This case we consider in the next article. To get the polynomials for the trivial operators N(1)=1111 and N(0)=0000, we can take any operator and add what is missing. \end{proof} \begin{example} We start with $1110$ and add $0001$ to get $1111$ and with polynomials $$(p+1)(q+1)+1 + (p+1)(q+1)=1.$$ We also notice that if we add $$N(op{\boldsymbol{x}}+op_{\boldsymbol{x}})=N(op_{\boldsymbol{x}})+N(op_{\boldsymbol{x}})=0$$ so we see that  $$N: op_{\boldsymbol{x}}\longrightarrow x_1x_2x_3x_4; \quad x_n=0,1$$ is a linear operator, defined by giving the operator number string, and we get the additive neutral element as result. Further more $$op_{\boldsymbol{x}}^2=op_{\boldsymbol{x}}$$ We see the idempotens. So this is all Boolean on the operator level.\end{example} 
In the next article, we will discuss, among other things, primitive operators and polynomial generators. Some other items in next article are: We complete our formulation of PC, by adding to our Normal Polynomial Family the three other families, which are useful for our tecnique.
For linear polynomials, we show that they form the full symmetry of the 4-Klein group and that it is possible to define the canonical decomposition of the negation [5]. For the quadratic polynomials, we show that the equality $(p\uparrow p)=\lnot p$ is undecidable in the operatorspace, but decidable in PBNF. We continue to look into the primitive operators and generators for the polynomial families.

\end{document}